\newfont{\footsc}{cmcsc10 at 8truept}
\newfont{\footbf}{cmbx10 at 8truept}  
\newfont{\footrm}{cmr10 at 10truept} 
\def\ang#1{\left\langle#1\right\rangle}
\newtheorem{theorem}{Theorem}
\newtheorem{Lem}{Lemma}
\newcommand{\qed}{\hfill \rule{0.7ex}{1.5ex}}
\newenvironment{proof}{\begin{trivlist} \item[\hskip \labelsep{\it
Proof.}]\setlength{\parindent}{0pt}}{\end{trivlist}}
\title{Barnes' multiple Bernoulli and poly-Bernoulli mixed-type polynomials}
\author{
Dae San Kim 
\thanks{
This work was supported by the National Research Foundation of Korea (NRF) grant funded by the Korean government (MOE) (No.2012R1A1A2003786). 
}\\
\small Department of Mathematics, Sogang University\\[-0.8ex]
\small Seoul 121-741, Republic of Korea\\[-0.8ex]
\small \texttt{dskim@sogang.ac.kr}\\\\
Taekyun Kim \\
\small Department of Mathematics, Kwangwoon University\\[-0.8ex]
\small Seoul 139-701, Republic of Korea\\[-0.8ex]
\small \texttt{tkkim@kw.ac.kr}\\\\
Takao Komatsu
\thanks{
The third author was supported in part by the Grant-in-Aid for Scientific research (C) (No.22540005), the Japan Society for the Promotion of Science. 
}\\
\small Graduate School of Science and Technology, Hirosaki University\\[-0.8ex]
\small Hirosaki 036-8561, Japan\\[-0.8ex]
\small \texttt{komatsu@cc.hirosaki-u.ac.jp}}
\date{
\small MR Subject Classifications: 05A15, 05A40, 11B68, 11B75, 65Q05}
\begin{document}
\maketitle

\begin{abstract}
In this paper, we consider Barnes' multiple Bernoulli and poly-Bernoulli mixed-type polynomials.  From the properties of Sheffer sequences of these polynomials arising from umbral calculus, we derive new and interesting identities. 
\end{abstract}

\section{Introduction}

In this paper, we consider the polynomials $S_n^{(r,k)}(x|a_1,\dots,a_r)$ whose generating function is given by 
\begin{equation}  
\frac{t^r}{\prod_{j=1}^r(e^{a_j t}-1)}\frac{{\rm Li}_k(1-e^{-t})}{1-e^{-t}}e^{x t}=\sum_{n=0}^\infty S_n^{(r,k)}(x|a_1,\dots,a_r)\frac{t^n}{n!}\,,
\label{barnesmpbp}
\end{equation} 
where $r\in\mathbb Z_{>0}$, $k\in\mathbb Z$, $a_1,\dots,a_r\ne 0$, and 
$$
{\rm Li}_k(x)=\sum_{m=1}^\infty\frac{x^m}{m^k}
$$ 
is the $k$th polylogarithm function. 
$S_n^{(r,k)}(x|a_1,\dots,a_r)$ will be called Barnes' multiple Bernoulli and poly-Bernoulli mixed-type polynomials. 
When $S_n^{(r,k)}(a_1,\dots,a_r)$\\$=S_n^{(r,k)}(0|a_1,\dots,a_r)$ will be called Barnes' multiple Bernoulli and poly-Bernoulli mixed-type numbers. 

Recall that, for every integer $k$, the poly-Bernoulli polynomials $B_n^{(k)}(x)$ are defined by the generating function as 
\begin{equation} 
\frac{{\rm Li}_k(1-e^{-t})}{1-e^{-t}}e^{x t}=\sum_{n=0}^\infty B_n^{(k)}(x)\frac{t^n}{n!} 
\label{pb}
\end{equation}  
(\cite{BH}, {\it Cf.}\cite{CC}). Also, recall that the Barnes' multiple Bernoulli polynomials $B_n(x|a_1,\dots,a_r)$ are defined by the generating function as 
\begin{equation} 
\frac{t^r}{\prod_{j=1}^r(e^{a_j t}-1)}e^{x t}=\sum_{n=0}^\infty B_n(x|a_1,\dots,a_r)\frac{t^n}{n!}\,,
\label{bmb} 
\end{equation}  
where $a_1,\dots,a_r\ne 0$ (\cite{JKKLPR}).  

In this paper, we consider Barnes' multiple Bernoulli and poly-Bernoulli mixed-type polynomials.  From the properties of Sheffer sequences of these polynomials arising from umbral calculus, we derive new and interesting identities.

\section{Umbral calculus}

Let $\mathbb C$ be the complex number field and let $\mathcal F$ be the set of all formal power series in the variable $t$: 
\begin{equation} 
\mathcal F=\left\{f(t)=\sum_{k=0}^\infty\frac{a_k}{k!}t^k\Bigg|a_k\in\mathbb C\right\}\,.
\label{uc:fps}
\end{equation} 

Let $\mathbb P=\mathbb C[x]$ and let $\mathbb P^\ast$ be the vector space of all linear functionals on $\mathbb P$. 
$\ang{L|p(x)}$ is the action of the linear functional $L$ on the polynomial $p(x)$, and
we recall that the vector space operations on $\mathbb P^\ast$ are defined by $\ang{L+M|p(x)}=\ang{L|p(x)}+\ang{M|p(x)}$, $\ang{cL|p(x)}=c\ang{L|p(x)}$, where $c$ is a complex constant in $\mathbb C$. For $f(t)\in\mathcal F$, let us define the linear functional on $\mathbb P$ by setting
\begin{equation} 
\ang{f(t)|x^n}=a_n,\quad (n\ge 0). 
\label{uc9}
\end{equation} 
In particular, 
\begin{equation} 
\ang{t^k|x^n}=n!\delta_{n,k}\quad (n, k\ge 0),
\label{uc10}
\end{equation} 
where $\delta_{n,k}$ is the Kronecker's symbol. 

For $f_L(t)=\sum_{k=0}^\infty\frac{\ang{L|x^k}}{k!}t^k$, we have $\ang{f_L(t)|x^n}=\ang{L|x^n}$. That is, $L=f_L(t)$. 
The map $L\mapsto f_L(t)$ is a vector space isomorphism from $\mathbb P^\ast$ onto $\mathcal F$. Henceforth, $\mathcal F$ denotes both the algebra of formal power series in $t$ and the vector space of all linear functionals on $\mathbb P$, and so an element $f(t)$ of $\mathcal F$ will be thought of as both a formal power series and a linear functional. We call $\mathcal F$ the {\it umbral algebra} and the {\it umbral calculus} is the study of umbral algebra. 
The order $O\bigl(f(t)\bigr)$ of a power series $f(t)(\ne 0)$ is the smallest integer $k$ for which the coefficient of $t^k$ does not vanish. If $O\bigl(f(t)\bigr)=1$, then $f(t)$ is called a {\it delta series}; if $O\bigl(f(t)\bigr)=0$, then $f(t)$ is called an {\it invertible series}. 
For $f(t), g(t)\in\mathcal F$ with $O\bigl(f(t)\bigr)=1$ and $O\bigl(g(t)\bigr)=0$, there exists a unique sequence $s_n(x)$ ($\deg s_n(x)=n$) such that $\ang{g(t)f(t)^k|s_n(x)}=n!\delta_{n,k}$, for $n,k\ge 0$. 
Such a sequence $s_n(x)$ is called the {\it Sheffer sequence} for $\bigl(g(t), f(t)\bigr)$ which is denoted by $s_n(x)\sim\bigl(g(t), f(t)\bigr)$. 

For $f(t), g(t)\in\mathcal F$ and $p(x)\in\mathbb P$, we have 
\begin{equation} 
\ang{f(t)g(t)|p(x)}=\ang{f(t)|g(t)p(x)}=\ang{g(t)|f(t)p(x)} 
\label{uc11} 
\end{equation} 
and
\begin{equation} 
f(t)=\sum_{k=0}^\infty\ang{f(t)|x^k}\frac{t^k}{k!},\quad 
p(x)=\sum_{k=0}^\infty\ang{t^k|p(x)}\frac{x^k}{k!}
\label{uc12}
\end{equation} 
(\cite[Theorem 2.2.5]{Roman}). Thus, by (\ref{uc12}), we get
\begin{equation} 
t^k p(x)=p^{(k)}(x)=\frac{d^k p(x)}{d x^k}\quad\hbox{and}\quad e^{yt}p(x)=p(x+y).
\label{uc13} 
\end{equation} 

Sheffer sequences are characterized in the generating function (\cite[Theorem 2.3.4]{Roman}). 

\begin{Lem} 
The sequence $s_n(x)$ is Sheffer for $\big(g(t),f(t)\bigr)$ if and only if 
$$
\frac{1}{g\bigl(\bar f(t)\bigr)}e^{y\bar f(t)}=\sum_{k=0}^\infty\frac{s_k(y)}{k!}t^k\quad(y\in\mathbb C)\,,
$$
where $\bar f(t)$ is the compositional inverse of $f(t)$.  
\label{th234}
\end{Lem}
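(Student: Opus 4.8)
The plan is to prove the two implications using the defining biorthogonality $\ang{g(t)f(t)^k|s_n(x)}=n!\delta_{n,k}$ of a Sheffer sequence together with the pairing identities (\ref{uc10})--(\ref{uc13}). As a preliminary, I would record that $\ang{e^{yt}|p(x)}=p(y)$ for every $p(x)\in\mathbb P$: expanding $e^{yt}=\sum_j\frac{y^j}{j!}t^j$ and applying (\ref{uc10}) gives $\ang{e^{yt}|x^n}=y^n$, and linearity finishes it. In particular $\ang{e^{yt}|s_k(x)}=s_k(y)$, which is what converts the functional computation into the generating-function coefficients.

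For the forward implication, the key step is an expansion (duality) lemma. Since $O\bigl(g(t)\bigr)=0$ and $O\bigl(f(t)\bigr)=1$, we have $O\bigl(g(t)f(t)^k\bigr)=k$, so $\{g(t)f(t)^k\}_{k\ge 0}$ is a basis of $\mathcal F$, and biorthogonality makes it dual to $\{s_n(x)\}$. Concretely, writing an arbitrary $h(t)\in\mathcal F$ as $\sum_k b_k\,g(t)f(t)^k$ (the coefficients $b_k$ being determined order by order) and pairing with $s_n(x)$ forces $b_n=\ang{h(t)|s_n(x)}/n!$. Applying this to $h(t)=e^{yt}$ yields
$$
e^{yt}=\sum_{k=0}^\infty\frac{s_k(y)}{k!}\,g(t)f(t)^k\,.
$$
Substituting $t\mapsto\bar f(t)$ and using $f\bigl(\bar f(t)\bigr)=t$ gives $e^{y\bar f(t)}=g\bigl(\bar f(t)\bigr)\sum_{k}\frac{s_k(y)}{k!}t^k$, and dividing by the invertible series $g\bigl(\bar f(t)\bigr)$ produces the claimed identity.

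For the converse, I would invoke the existence and uniqueness of the Sheffer sequence for $\bigl(g(t),f(t)\bigr)$ stated just before the lemma. Let $t_n(x)$ denote that sequence; by the forward direction its generating function is $\frac{1}{g(\bar f(t))}e^{y\bar f(t)}=\sum_k\frac{t_k(y)}{k!}t^k$. Comparing with the hypothesis $\sum_k\frac{s_k(y)}{k!}t^k$ and matching coefficients of $t^k$ (each coefficient being a polynomial determined by its values at all $y\in\mathbb C$) gives $s_k=t_k$ for all $k$, so $s_n(x)$ is Sheffer for $\bigl(g(t),f(t)\bigr)$.

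The step I expect to be the main obstacle is the expansion lemma together with the legitimacy of the substitution $t\mapsto\bar f(t)$ inside the infinite sum: one must verify that $\{g(t)f(t)^k\}$ is genuinely a topological basis of $\mathcal F$ and that composing with the delta series $\bar f(t)$ keeps the orders of the summands tending to infinity, so the rearrangement is valid in the formal power series topology. Once these formal points are secured, the remaining manipulations are routine applications of (\ref{uc11}) and (\ref{uc13}).
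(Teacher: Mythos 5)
The paper gives no proof of this lemma at all---it is quoted directly from \cite[Theorem 2.3.4]{Roman}---and your argument is correct and essentially reproduces the standard proof given there: the expansion theorem writing $e^{yt}=\sum_{k\ge0}\frac{s_k(y)}{k!}g(t)f(t)^k$ against the sequence $\{g(t)f(t)^k\}$ dual to $\{s_n(x)\}$, followed by the substitution $t\mapsto\bar f(t)$ and division by the invertible series $g\bigl(\bar f(t)\bigr)$, with existence/uniqueness of the Sheffer sequence disposing of the converse. The formal-convergence points you flag are genuine but routine, since $O\bigl(g(t)f(t)^k\bigr)=k\to\infty$ and composition with the delta series $\bar f(t)$ preserves orders, so the rearrangement is valid in the formal power series topology.
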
  

For $s_n(x)\sim\bigl(g(t), f(t)\bigr)$, we have the following equations (\cite[Theorem 2.3.7, Theorem 2.3.5, Theorem 2.3.9]{Roman}): 
\begin{align} 
f(t)s_n(x)&=n s_{n-1}(x)\quad (n\ge 0), 
\label{uc15}\\ 
s_n(x)&=\sum_{j=0}^n\frac{1}{j!}\ang{g\bigl(\bar f(t)\bigr)^{-1}\bar f(t)^j|x^n}x^j,
\label{uc16}\\
s_n(x+y)&=\sum_{j=0}^n\binom{n}{j}s_j(x)p_{n-j}(y)\,, 
\label{uc17}
\end{align} 
where $p_n(x)=g(t)s_n(x)$.

Assume that $p_n(x)\sim\bigl(1, f(t)\bigr)$ and $q_n(x)\sim\bigl(1, g(t)\bigr)$. Then the transfer formula (\cite[Corollary 3.8.2]{Roman}) is given by 
$$ 
q_n(x)=x\left(\frac{f(t)}{g(t)}\right)^n x^{-1}p_n(x)\quad (n\ge 1). 
$$  
For $s_n(x)\sim\bigl(g(t), f(t)\bigr)$ and $r_n(x)\sim\bigl(h(t), l(t)\bigr)$, assume that 
$$
s_n(x)=\sum_{m=0}^n C_{n,m}r_m(x)\quad (n\ge 0)\,. 
$$ 
Then we have (\cite[p.132]{Roman})  
\begin{equation} 
C_{n,m}=\frac{1}{m!}\ang{\frac{h\bigl(\bar f(t)\bigr)}{g\bigl(\bar f(t)\bigr)}l\bigl(\bar f(t)\bigr)^m\Bigg|x^n}\,. 
\label{uc19}
\end{equation}

\section{Main results}  

We now note that $B_n^{(k)}(x)$, $B_n(x|a_1,\dots,a_r)$ and $S_n^{(r,k)}(x|a_1,\dots,a_r)$ are the Appell sequences for 
$$
g_k(t)=\frac{1-e^{-t}}{{\rm Li}_k(1-e^{-t})},\quad g_r(t)=\frac{\prod_{j=1}^r(e^{a_j t}-1)}{t^r},\quad g_{r,k}(t)=\frac{\prod_{j=1}^r(e^{a_j t}-1)}{t^r}\frac{1-e^{-t}}{{\rm Li}_k(1-e^{-t})}\,. 
$$ 
So, 
\begin{align} 
B_n^{(k)}(x)&\sim\left(\frac{1-e^{-t}}{{\rm Li}_k(1-e^{-t})},t\right)\,,
\label{10a}\\
B_n(x|a_1,\dots,a_r)&\sim\left(\frac{\prod_{j=1}^r(e^{a_j t}-1)}{t^r}, t\right)\,,\label{10b}\\
S_n^{(r,k)}(x|a_1,\dots,a_r)&\sim\left(\frac{\prod_{j=1}^r(e^{a_j t}-1)}{t^r}\frac{1-e^{-t}}{{\rm Li}_k(1-e^{-t})},t\right)\,. 
\label{10c} 
\end{align}  
In particular, we have  
\begin{align} 
t B_n^{(k)}(x)&=\frac{d}{dx}B_n^{(k)}(x)=n B_{n-1}^{(k)}(x)\,,
\label{11a}\\
t B_n(x|a_1,\dots,a_r)&=\frac{d}{dx}B_n(x|a_1,\dots,a_r)\notag\\
&=n B_{n-1}(x|a_1,\dots,a_r)\,,\label{11b}\\
t S_n^{(r,k)}(x|a_1,\dots,a_r)&=\frac{d}{dx}S_n^{(r,k)}(x|a_1,\dots,a_r)\notag\\
&=n S_{n-1}^{(r,k)}(x|a_1,\dots,a_r)\,. 
\label{11c} 
\end{align}  
Notice that  
$$
\frac{d}{dx}{\rm Li}_k(x)=\frac{1}{x}{\rm Li}_{k-1}(x)\,. 
$$ 


\subsection{Explicit expressions}

Write $B_n(a_1,\dots,a_r):=B_n(0|a_1,\dots,a_r)$ and $S_n^{(r,k)}(a_1,\dots,a_r):=S_n^{(r,k)}(0|a_1,\dots,a_r)$.  Let $(n)_j=n(n-1)\cdots(n-j+1)$ ($j\ge 1$) with $(n)_0=1$.

\begin{theorem} 
\begin{align} 
S_n^{(r,k)}(x|a_1,\dots,a_r)&=\sum_{l=0}^n\binom{n}{l}B_{n-l}(a_1,\dots,a_r)B_l^{(k)}(x)\,,\label{100a}\\  
&=\sum_{l=0}^n\binom{n}{l}B_{n-l}^{(k)}B_l(x|a_1,\dots,a_r)\,,
\label{100b}\\
&=\sum_{l=0}^n\sum_{m=l}^n\sum_{j=0}^m(-1)^j\binom{m}{j}\binom{n}{l}\frac{1}{(m+1)^k}B_{n-l}(a_1,\dots,a_r)(x-j)^l\,, 
\label{100c}\\
&=\sum_{l=0}^n\left(\sum_{j=l}^n\sum_{m=0}^{n-j}(-1)^{n-m-j}\binom{n}{j}\binom{j}{l}\right.\notag\\&\qquad\qquad\left.\times
\frac{m!}{(m+1)^k}S_2(n-j,m)B_{j-l}(a_1,\dots,a_r)\right)x^l\,,  
\label{100d}\\ 
&=\sum_{j=0}^n\binom{n}{j}S_{n-j}^{(r,k)}(a_1,\dots,a_r)x^j\,. 
\label{100e} 
\end{align}  
\label{th100}  
\end{theorem}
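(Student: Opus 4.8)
The plan is to derive every identity from one structural fact: since $S_n^{(r,k)}(x|a_1,\dots,a_r)$ is the Appell sequence for $g_{r,k}(t)=g_r(t)g_k(t)$, Lemma~\ref{th234} gives the factored generating function
\[
\sum_{n=0}^\infty S_n^{(r,k)}(x|a_1,\dots,a_r)\frac{t^n}{n!}
=\frac{t^r}{\prod_{j=1}^r(e^{a_jt}-1)}\cdot\frac{{\rm Li}_k(1-e^{-t})}{1-e^{-t}}\cdot e^{xt},
\]
a product of three power series whose coefficients are the Barnes numbers, the poly-Bernoulli numbers/polynomials, and the monomials $x^l$. Identities \eqref{100a} and \eqref{100b} are then immediate Cauchy products: attaching $e^{xt}$ to the Barnes factor turns it into $\sum_l B_l(x|a_1,\dots,a_r)t^l/l!$ and leaves $\sum_p B_p^{(k)}t^p/p!$, which yields \eqref{100b}, while attaching $e^{xt}$ to the poly-Bernoulli factor yields \eqref{100a}. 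Identity \eqref{100e} is the same convolution with $e^{xt}=\sum_l x^l t^l/l!$ kept intact; equivalently it is the generic Appell expansion $s_n(x)=\sum_j\binom{n}{j}s_{n-j}(0)\,x^j$, which also follows from \eqref{uc17} on noting that the sequence associated to $(1,t)$ is $p_m(x)=x^m$.

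The substance lies in expanding the poly-Bernoulli factor explicitly, and I would isolate this as the key step by writing $\frac{{\rm Li}_k(1-e^{-t})}{1-e^{-t}}=\sum_{m\ge 0}\frac{(1-e^{-t})^m}{(m+1)^k}$ and expanding $(1-e^{-t})^m$ in two ways. For \eqref{100c} I would use the binomial theorem $(1-e^{-t})^m=\sum_{j=0}^m\binom{m}{j}(-1)^j e^{-jt}$, so the factor becomes $\sum_m\frac{1}{(m+1)^k}\sum_{j=0}^m\binom{m}{j}(-1)^j e^{(x-j)t}$; reading off the coefficient of $t^l/l!$ gives $B_l^{(k)}(x)=\sum_m\frac{1}{(m+1)^k}\sum_{j=0}^m\binom{m}{j}(-1)^j(x-j)^l$, and inserting this into \eqref{100a} produces \eqref{100c}. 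For \eqref{100d} I would instead use the Stirling expansion $(1-e^{-t})^m=m!\sum_{i\ge m}(-1)^{i-m}S_2(i,m)\,t^i/i!$, which identifies the coefficients of the poly-Bernoulli factor as the closed form $B_q^{(k)}=\sum_{m=0}^q(-1)^{q-m}\frac{m!}{(m+1)^k}S_2(q,m)$; substituting this into the triple Cauchy product of the three series and gathering the coefficient of $x^l$ gives \eqref{100d}.

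I expect the one genuinely fiddly point to be the index bookkeeping in \eqref{100d}: after the triple product the natural summation variables are the three degrees $p+q+l=n$ coming from the Barnes, poly-Bernoulli, and exponential factors, and these must be matched to the stated indices. I would carry this out by setting $q=n-j$, so that $S_2(q,m)=S_2(n-j,m)$, $B_{n-l-q}=B_{j-l}$, the sign becomes $(-1)^{q-m}=(-1)^{n-j-m}$, and the multinomial coefficient rearranges as $\binom{n}{l}\binom{n-l}{q}=\binom{n}{j}\binom{j}{l}$; these substitutions turn the product into exactly the triple sum in \eqref{100d}. No analytic subtleties arise: in \eqref{100c} the alternating sum $\sum_{j=0}^m\binom{m}{j}(-1)^j(x-j)^l$ vanishes once $m>l$ (it reduces to a combination of the $S_2(i,m)$ with $i\le l$, all zero in that range), so the $m$-summation is automatically finite and only the terms with $m\le l$ contribute. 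Thus all five formulas reduce to elementary series manipulations once the two expansions of $(1-e^{-t})^m$ are in place.
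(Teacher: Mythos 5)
Your proposal is correct and follows essentially the same route as the paper: the paper likewise obtains (\ref{100a}), (\ref{100b}) and (\ref{100e}) by splitting the generating function into its Barnes, poly-Bernoulli and exponential factors and convolving (phrased there via umbral pairings and via (\ref{uc16})), and obtains (\ref{100c}) and (\ref{100d}) by substituting the two expansions of $\frac{{\rm Li}_k(1-e^{-t})}{1-e^{-t}}x^n$, which the paper cites from \cite{KKL} and which you rederive from the binomial and Stirling expansions of $(1-e^{-t})^m$. One small remark: as you correctly observe, the $m$-sum in your version of (\ref{100c}) runs over $0\le m\le n$ with the terms $m>l$ vanishing, whereas the displayed formula has $l\le m\le n$; the paper's own computation also arrives at $\sum_{m=0}^{n}$ before its final rewriting, so this is a discrepancy in the printed index range rather than a defect of your argument.
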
 

\begin{proof} 
By (\ref{barnesmpbp}), (\ref{pb}) and (\ref{bmb}), we have 
\begin{align*}  
S_n^{(r,k)}(y|a_1,\dots,a_r)&=\ang{\sum_{i=0}^\infty S_i^{(r,k)}(y|a_1,\dots,a_r)\frac{t^i}{i!}\Big| x^n}\\
&=\ang{\frac{t^r}{\prod_{j=1}^r(e^{a_j t}-1)}\frac{{\rm Li}_k(1-e^{-t})}{1-e^{-t}}e^{yt}\Big| x^n}\\
&=\ang{\frac{t^r}{\prod_{j=1}^r(e^{a_j t}-1)}\Big|\frac{{\rm Li}_k(1-e^{-t})}{1-e^{-t}}e^{yt}x^n}\\
&=\ang{\frac{t^r}{\prod_{j=1}^r(e^{a_j t}-1)}\Big|\sum_{l=0}^\infty B_l^{(k)}(y)\frac{t^l}{l!}x^n}\\
&=\ang{\frac{t^r}{\prod_{j=1}^r(e^{a_j t}-1)}\Big|\sum_{l=0}^n\binom{n}{l}B_l^{(k)}(y)x^{n-l}}\\
&=\sum_{l=0}^n\binom{n}{l}B_l^{(k)}(y)\ang{\frac{t^r}{\prod_{j=1}^r(e^{a_j t}-1)}\Big|x^{n-l}}\\
&=\sum_{l=0}^n\binom{n}{l}B_l^{(k)}(y)\ang{\sum_{i=0}^\infty B_i(a_1,\dots,a_r)\frac{t^i}{i!}\Big|x^{n-l}}\\
&=\sum_{l=0}^n\binom{n}{l}B_l^{(k)}(y)B_{n-l}(a_1,\dots,a_r)\,.  
\end{align*} 
So, we get (\ref{100a}).


We also have 
\begin{align*}  
S_n^{(r,k)}(y|a_1,\dots,a_r)&=\ang{\sum_{i=0}^\infty S_i^{(r,k)}(y|a_1,\dots,a_r)\frac{t^i}{i!}\Big| x^n}\\
&=\ang{\frac{{\rm Li}_k(1-e^{-t})}{1-e^{-t}}\Big|\frac{t^r}{\prod_{j=1}^r(e^{a_j t}-1)}e^{yt}x^n}\\
&=\ang{\frac{{\rm Li}_k(1-e^{-t})}{1-e^{-t}}\Big|\sum_{l=0}^\infty B_l(y|a_1,\dots,a_r)\frac{t^l}{l!}x^n}\\
&=\ang{\frac{{\rm Li}_k(1-e^{-t})}{1-e^{-t}}\Big|\sum_{l=0}^n B_l(y|a_1,\dots,a_r)\binom{n}{l}x^{n-l}}\\
&=\sum_{l=0}^n\binom{n}{l}B_l(y|a_1,\dots,a_r)\ang{\frac{{\rm Li}_k(1-e^{-t})}{1-e^{-t}}\Big|x^{n-l}}\\
&=\sum_{l=0}^n\binom{n}{l}B_l(y|a_1,\dots,a_r)\ang{\sum_{i=0}^\infty B_i^{(k)}\frac{t^i}{i!}\Big|x^{n-l}}\\
&=\sum_{l=0}^n\binom{n}{l}B_l(y|a_1,\dots,a_r)B_{n-l}^{(k)}\,. 
\end{align*} 
Thus, we get (\ref{100b}).   


In \cite{KKL} we obtained that 
$$
\frac{{\rm Li}_k(1-e^{-t})}{1-e^{-t}}x^n=\sum_{m=0}^n\frac{1}{(m+1)^k}\sum_{j=0}^m(-1)^j\binom{m}{j}(x-j)^n\,. 
$$ 
So, 
\begin{align*}  
S_n^{(r,k)}(x|a_1,\dots,a_r)&=\frac{t^r}{\prod_{j=1}^r(e^{a_j t}-1)}\frac{{\rm Li}_k(1-e^{-t})}{1-e^{-t}}x^n\\
&=\sum_{m=0}^n\frac{1}{(m+1)^k}\sum_{j=0}^m(-1)^j\binom{m}{j}\frac{t^r}{\prod_{j=1}^r(e^{a_j t}-1)}(x-j)^n\\
&=\sum_{m=0}^n\frac{1}{(m+1)^k}\sum_{j=0}^m(-1)^j\binom{m}{j}\sum_{l=0}^n\binom{n}{l}B_{n-l}(a_1,\dots,a_r)(x-j)^l\\
&=\sum_{l=0}^n\sum_{m=l}^n\sum_{j=0}^m(-1)^j\binom{m}{j}\binom{n}{l}\frac{1}{(m+1)^k}B_{n-l}(a_1,\dots,a_r)(x-j)^l\,,   
\end{align*} 
which is the identity (\ref{100c}).


In \cite{KKL} we obtained that 
$$
\frac{{\rm Li}_k(1-e^{-t})}{1-e^{-t}}x^n=\sum_{j=0}^n\left(\sum_{m=0}^{n-j}\frac{(-1)^{n-m-j}}{(m+1)^k}\binom{n}{j}m! S_2(n-j,m)\right)x^j\,,  
$$ 
where $S_2(l,m)$ are the Stirling numbers of the second kind, defined by 
$$
(e^t-1)^m
=m!\sum_{l=m}^\infty S_2(l,m)\frac{t^l}{l!}\,. 
$$ 
Thus, 
\begin{align*} 
&S_n^{(r,k)}(x|a_1,\dots,a_r)=\sum_{j=0}^n\left(\sum_{m=0}^{n-j}\frac{(-1)^{n-m-j}}{(m+1)^k}\binom{n}{j}m! S_2(n-j,m)\right)\frac{t^r}{\prod_{i=1}^r(e^{a_i t}-1)}x^j\\
&=\sum_{j=0}^n\left(\sum_{m=0}^{n-j}\frac{(-1)^{n-m-j}}{(m+1)^k}\binom{n}{j}m! S_2(n-j,m)\right)B_j(x|a_1,\dots,a_r)\\
&=\sum_{j=0}^n\left(\sum_{m=0}^{n-j}\frac{(-1)^{n-m-j}}{(m+1)^k}\binom{n}{j}m! S_2(n-j,m)\right)\sum_{l=0}^j\binom{j}{l}B_{j-l}(a_1,\dots,a_r)x^l\\
&=\sum_{l=0}^n\left(\sum_{j=l}^n\sum_{m=0}^{n-j}(-1)^{n-m-j}\binom{n}{j}\binom{j}{l}\frac{m!}{(m+1)^k}S_2(n-j,m)B_{j-l}(a_1,\dots,a_r)\right)x^l\,,  
\end{align*} 
which is the identity (\ref{100d}). 


By (\ref{uc16}) with (\ref{10c}), we have 
\begin{align*}  
\ang{g\bigl(\bar f(t)\bigr)^{-1}\bar f(t)^j|x^n}&=\ang{\frac{t^r}{\prod_{j=1}^r(e^{a_j t}-1)}\frac{{\rm Li}_k(1-e^{-t})}{1-e^{-t}}t^j\Big|x^n}\\
&=(n)_j\ang{\frac{t^r}{\prod_{j=1}^r(e^{a_j t}-1)}\frac{{\rm Li}_k(1-e^{-t})}{1-e^{-t}}\Big|x^{n-j}}\\
&=(n)_j\ang{\sum_{i=0}^\infty S_i^{(r,k)}(a_1,\dots,a_r)\frac{t^i}{i!}\Big|x^{n-j}}\\
&=(n)_j S_{n-j}^{(r,k)}(a_1,\dots,a_r)\,.   
\end{align*} 
Thus, we get (\ref{100e}).  
\qed\end{proof} 


\subsection{Sheffer identity} 

\begin{theorem} 
\begin{equation} 
S_n^{(r,k)}(x+y|a_1,\dots,a_r)=\sum_{j=0}^n\binom{n}{j}S_j^{(r,k)}(x|a_1,\dots,a_r)y^{n-j}\,. 
\label{20} 
\end{equation} 
\label{th200} 
\end{theorem}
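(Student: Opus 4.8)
The plan is to read off the statement directly from the general Sheffer identity (\ref{uc17}), using only the fact that $S_n^{(r,k)}(x|a_1,\dots,a_r)$ is an Appell sequence. By (\ref{10c}) we have $S_n^{(r,k)}(x|a_1,\dots,a_r)\sim\bigl(g_{r,k}(t),t\bigr)$, so the delta-series part of the Sheffer data is $f(t)=t$. Applying (\ref{uc17}) with $g(t)=g_{r,k}(t)$ and $f(t)=t$ gives
$$
S_n^{(r,k)}(x+y|a_1,\dots,a_r)=\sum_{j=0}^n\binom{n}{j}S_j^{(r,k)}(x|a_1,\dots,a_r)\,p_{n-j}(y)\,,
$$
where $p_n(x)=g_{r,k}(t)S_n^{(r,k)}(x|a_1,\dots,a_r)$ and, by the general theory underlying (\ref{uc17}), $p_n(x)\sim\bigl(1,t\bigr)$.

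The one substantive step is to evaluate $p_n(y)$. Since the associated sequence $p_n(x)$ is Sheffer for $\bigl(1,t\bigr)$, whose compositional inverse is again $\bar f(t)=t$, Lemma \ref{th234} yields the generating function $e^{yt}=\sum_{k=0}^\infty\frac{p_k(y)}{k!}t^k$, so that $p_k(y)=y^k$. Equivalently, the defining relation $\ang{t^k|p_n(x)}=n!\delta_{n,k}$ together with (\ref{uc10}) forces $p_n(x)=x^n$. Substituting $p_{n-j}(y)=y^{n-j}$ into the display above then gives the claimed identity.

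I expect no real obstacle here: the entire content reduces to recognizing that the delta series is $f(t)=t$ and hence $p_n(x)=x^n$, after which (\ref{uc17}) does all the work. One could alternatively argue by a direct generating-function computation, expanding $e^{(x+y)t}=e^{xt}e^{yt}$ inside (\ref{barnesmpbp}) and comparing coefficients of $t^n/n!$, but this merely reproves the special case of (\ref{uc17}) that we are already entitled to invoke, so the Appell-sequence route is the cleaner one.
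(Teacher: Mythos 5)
Your proof is correct and follows essentially the same route as the paper: both apply (\ref{uc17}) to the Appell sequence $S_n^{(r,k)}(x|a_1,\dots,a_r)\sim\bigl(g_{r,k}(t),t\bigr)$ with $p_n(x)=g_{r,k}(t)S_n^{(r,k)}(x|a_1,\dots,a_r)=x^n\sim(1,t)$. The only difference is that you spell out the justification that $p_n(x)=x^n$, which the paper simply asserts.
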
 

\begin{proof} 
By (\ref{10c}) with 
\begin{align*}  
p_n(x)&=\frac{\prod_{j=1}^r(e^{a_j t}-1)}{t^r}\frac{1-e^{-t}}{{\rm Li}_k(1-e^{-t})}S_n^{(r,k)}(x|a_1,\dots,a_r)\\
&=x^n\sim(1,t)\,,  
\end{align*}  
using (\ref{uc17}), we have (\ref{20}).  
\qed\end{proof}


\subsection{Recurrence}  

\begin{theorem} 
\begin{align}  
&S_{n+1}^{(r,k)}(x|a_1,\dots,a_r)=x S_n^{(r,k)}(x|a_1,\dots,a_r)\notag\\
&\quad -\frac{1}{n+1}\sum_{j=1}^r\sum_{l=0}^n\binom{n+1}{l}(-a_j)^{n+1-l}B_{n+1-l}S_l^{(r,k)}(x|a_1,\dots,a_r)\notag\\
&\quad -\frac{1}{n+1}\left(S_{n+1}^{(r+1,k)}(x|a_1,\dots,a_r,1)-S_{n+1}^{(r+1,k-1)}(x|a_1,\dots,a_r,1)\right)\,, \label{30} 
\end{align}  
where $B_n$ is the $n$th ordinary Bernoulli number.  
\label{th30} 
\end{theorem}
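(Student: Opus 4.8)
The plan is to derive (\ref{30}) from the Appell recurrence for $S_n^{(r,k)}$. Abbreviating $g=g_{r,k}(t)$ for the invertible series in (\ref{10c}), so that $S_n^{(r,k)}(x|a_1,\dots,a_r)=g(t)^{-1}x^n$, I would first record the operator identity $g(t)^{-1}x=x\,g(t)^{-1}-\frac{g'(t)}{g(t)}g(t)^{-1}$. This follows from the commutator rule $f(t)x-xf(t)=f'(t)$ (itself a consequence of $t\,p(x)=p'(x)$ from (\ref{uc13})) applied to $f=g^{-1}$, together with $(g^{-1})'=-(g'/g)g^{-1}$. Evaluating at $x^n$ gives the recurrence
\[
S_{n+1}^{(r,k)}(x|a_1,\dots,a_r)=x\,S_n^{(r,k)}(x|a_1,\dots,a_r)-\frac{g'(t)}{g(t)}S_n^{(r,k)}(x|a_1,\dots,a_r),
\]
so everything reduces to computing the logarithmic derivative $g'(t)/g(t)$ and matching the two correction terms.

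Second, I would compute $g'(t)/g(t)=(\log g(t))'$ termwise. Differentiating $\log g(t)=\sum_{j=1}^r\log(e^{a_jt}-1)-r\log t+\log(1-e^{-t})-\log{\rm Li}_k(1-e^{-t})$ and using $\frac{d}{dx}{\rm Li}_k(x)=\frac1x{\rm Li}_{k-1}(x)$ (recorded after (\ref{11c})) gives
\[
\frac{g'(t)}{g(t)}=\underbrace{\Bigl(\sum_{j=1}^r\frac{a_je^{a_jt}}{e^{a_jt}-1}-\frac{r}{t}\Bigr)}_{A}+\underbrace{\frac{e^{-t}}{1-e^{-t}}\Bigl(1-\frac{{\rm Li}_{k-1}(1-e^{-t})}{{\rm Li}_k(1-e^{-t})}\Bigr)}_{B}.
\]
I expect $-A$ to produce the Bernoulli-number sum in (\ref{30}) and $-B$ to produce the difference of $(r+1)$-parameter polynomials, so I would treat the two braces in turn.

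For the part $A$, I would expand $\frac{a_je^{a_jt}}{e^{a_jt}-1}=\frac{-a_j}{e^{-a_jt}-1}=\frac1t\sum_{m=0}^\infty B_m\frac{(-a_j)^mt^m}{m!}$ via the ordinary Bernoulli generating function; the $m=0$ contributions sum to $r/t$ and cancel $-r/t$, leaving $A=\sum_{j=1}^r\sum_{m=1}^\infty B_m\frac{(-a_j)^m}{m!}t^{m-1}$. Since $S_n^{(r,k)}$ is Appell, iterating (\ref{11c}) gives $t^{m-1}S_n^{(r,k)}=\frac{n!}{(n-m+1)!}S_{n-m+1}^{(r,k)}$; reindexing by $l=n-m+1$ and using $\frac{n!}{(n+1-l)!\,l!}=\frac1{n+1}\binom{n+1}{l}$ turns $-A\,S_n^{(r,k)}(x)$ into exactly the middle term of (\ref{30}). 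This step is routine bookkeeping once the Bernoulli expansion is in place.

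Finally, for $B$ I would apply it to $S_n^{(r,k)}(x)=\frac{t^r}{\prod_{j=1}^r(e^{a_jt}-1)}\frac{{\rm Li}_k(1-e^{-t})}{1-e^{-t}}x^n$; the factor ${\rm Li}_k(1-e^{-t})$ cancels, giving $B\,S_n^{(r,k)}(x)=h(t)x^n$ with $h(t)=\frac{e^{-t}}{1-e^{-t}}\frac{t^r}{\prod_{j=1}^r(e^{a_jt}-1)}\frac{{\rm Li}_k(1-e^{-t})-{\rm Li}_{k-1}(1-e^{-t})}{1-e^{-t}}$. The crux, and the step I expect to be the main obstacle, is to recognize $h(t)$ as coming from an $(r+1)$-parameter kernel: using $\frac{t}{e^t-1}=t\,\frac{e^{-t}}{1-e^{-t}}$ I obtain $\frac{t}{e^t-1}\frac{t^r}{\prod_{j=1}^r(e^{a_jt}-1)}=\frac{t^{r+1}}{\prod_{j=1}^{r+1}(e^{a_jt}-1)}$ with $a_{r+1}=1$, so that $t\,h(t)$ is precisely the operator which, read against (\ref{barnesmpbp}) with $r+1$ parameters, yields $t\,h(t)x^{n+1}=S_{n+1}^{(r+1,k)}(x|a_1,\dots,a_r,1)-S_{n+1}^{(r+1,k-1)}(x|a_1,\dots,a_r,1)$. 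Since $t$ commutes with the power series $h(t)$ and $t\,x^{n+1}=(n+1)x^n$, the left-hand side equals $(n+1)h(t)x^n=(n+1)B\,S_n^{(r,k)}(x)$, whence $-B\,S_n^{(r,k)}(x)$ is the last term of (\ref{30}). Collecting the three contributions then proves (\ref{30}).
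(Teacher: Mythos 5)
Your proposal is correct and follows essentially the same route as the paper: the Appell recurrence $S_{n+1}^{(r,k)}=\bigl(x-g_{r,k}'(t)/g_{r,k}(t)\bigr)S_n^{(r,k)}$, logarithmic differentiation of $g_{r,k}$, the Bernoulli generating-function expansion for the Barnes factors, and the identification of $\frac{t}{e^t-1}$ times the polylog difference as the $(r+1)$-parameter kernel with $a_{r+1}=1$. The only cosmetic differences are that you derive the operator identity from the commutator rule where the paper cites \cite[Corollary 3.7.2]{Roman}, and you cancel the $r/t$ terms factor-by-factor where the paper combines over a common denominator and checks the order before dividing by $t$.
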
 
\begin{proof} 
By applying  
$$
s_{n+1}(x)=\left(x-\frac{g'(t)}{g(t)}\right)\frac{1}{f'(t)}s_n(x)
$$ 
(\cite[Corollary 3.7.2]{Roman}) with (\ref{10c}), we get 
$$
S_{n+1}^{(r,k)}(x|a_1,\dots,a_r)=\left(x-\frac{g_{r,k}'(t)}{g_{r,k}(t)}\right)S_n^{(r,k)}(x|a_1,\dots,a_r)\,. 
$$   
Now,  
\begin{align*} 
&\frac{g_{r,k}'(t)}{g_{r,k}(t)}=(\ln g_{r,k}(t))'\\
&=\left(\sum_{j=1}^r\ln(e^{a_j t}-1)-r\ln t+\ln(1-e^{-t})-\ln{\rm Li}_k(1-e^{-t})\right)'\\
&=\sum_{j=1}^r\frac{a_j e^{a_j t}}{e^{a_j t}-1}-\frac{r}{t}+\frac{e^{-t}}{1-e^{-t}}\left(1-\frac{{\rm Li}_{k-1}(1-e^{-t})}{{\rm Li}_k(1-e^{-t})}\right)\\
&=\frac{\sum_{j=1}^r\prod_{i\ne j}(e^{a_i t}-1)(a_j t e^{a_j t}-e^{a_j t}+1)}{t\prod_{j=1}^r(e^{a_j t}-1)}+\frac{t}{e^t-1}\frac{{\rm Li}_k(1-e^{-t})-{\rm Li}_{k-1}(1-e^{-t})}{t{\rm Li}_k(1-e^{-t})}\,.  
\end{align*} 
Since 
\begin{align*}  
\frac{\sum_{j=1}^r\prod_{i\ne j}(e^{a_i t}-1)(a_j t e^{a_j t}-e^{a_j t}+1)}{\prod_{j=1}^r(e^{a_j t}-1)}&=\frac{\frac{1}{2}\bigl(\sum_{j=1}^r a_1\cdots a_{j-1}a_j^2 a_{j+1}\cdots a_r\bigr)t^{r+1}+\cdots}{(a_1\cdots a_r)t^r+\cdots}\\
&=\frac{1}{2}\left(\sum_{j=1}^r a_j\right)t+\cdots
\end{align*} 
is a series with order$\ge 1$ and 
$$ 
\frac{{\rm Li}_k(1-e^{-t})-{\rm Li}_{k-1}(1-e^{-t})}{1-e^{-t}}
=\left(\frac{1}{2^k}-\frac{1}{2^{k-1}}\right)t+\cdots
$$ 
is a delta series, we have 
\begin{align*}  
&S_{n+1}^{(r,k)}(x|a_1,\dots,a_r)
=x S_n^{(r,k)}(x|a_1,\dots,a_r)-\frac{g_{r,k}'(t)}{g_{r,k}(t)}S_n^{(r,k)}(x|a_1,\dots,a_r)\\
&=x S_n^{(r,k)}(x|a_1,\dots,a_r)-\frac{g_{r,k}'(t)}{g_{r,k}(t)}\frac{t^r}{\prod_{j=1}^r(e^{a_j t}-1)}\frac{{\rm Li}_k(1-e^{-t})}{1-e^{-t}}x^n\\
&=x S_n^{(r,k)}(x|a_1,\dots,a_r)\\
&\quad -\frac{t^r}{\prod_{j=1}^r(e^{a_j t}-1)}\frac{{\rm Li}_k(1-e^{-t})}{1-e^{-t}}\frac{\sum_{j=1}^r\prod_{i\ne j}(e^{a_i t}-1)(a_j t e^{a_j t}-e^{a_j t}+1)}{t\prod_{j=1}^r(e^{a_j t}-1)}x^n\\
&\quad -\frac{t^r}{\prod_{j=1}^r(e^{a_j t}-1)}\frac{t}{e^t-1}\frac{{\rm Li}_k(1-e^{-t})-{\rm Li}_{k-1}(1-e^{-t})}{t(1-e^{-t})}x^n\,. 
\end{align*} 
Now, 
\begin{align*} 
&\frac{\sum_{j=1}^r\prod_{i\ne j}(e^{a_i t}-1)(a_j t e^{a_j t}-e^{a_j t}+1)}{t\prod_{j=1}^r(e^{a_j t}-1)}x^n\\
&=\frac{\sum_{j=1}^r\prod_{i\ne j}(e^{a_i t}-1)(a_j t e^{a_j t}-e^{a_j t}+1)}{\prod_{j=1}^r(e^{a_j t}-1)}\frac{x^{n+1}}{n+1}\\
&=\frac{1}{n+1}\sum_{j=1}^r\frac{a_j t e^{a_j t}-e^{a_j t}+1}{e^{a_j t}-1}x^{n+1}\\
&=\frac{1}{n+1}\sum_{j=1}^r\left(\frac{a_j t e^{a_j t}}{e^{a_j t}-1}-1\right)x^{n+1}\\
&=\frac{1}{n+1}\sum_{j=1}^r\left(\sum_{l=0}^\infty\frac{(-1)^l B_l a_j^l}{l!}t^l-1\right)x^{n+1}\\
&=\frac{1}{n+1}\sum_{j=1}^r\left(\sum_{l=0}^{n+1}\binom{n+1}{l}(-a_j)^l B_l x^{n+1-l}-x^{n+1}\right)\\
&=\frac{1}{n+1}\sum_{j=1}^r\sum_{l=1}^{n+1}\binom{n+1}{l}(-a_j)^l B_l x^{n+1-l}\\
&=\frac{1}{n+1}\sum_{j=1}^r\sum_{l=0}^{n}\binom{n+1}{l}(-a_j)^{n+1-l}B_{n+1-l}x^l\,. 
\end{align*}
Also,   
$$   
\frac{{\rm Li}_k(1-e^{-t})-{\rm Li}_{k-1}(1-e^{-t})}{t(1-e^{-t})}x^n
=\frac{1}{n+1}\frac{{\rm Li}_k(1-e^{-t})-{\rm Li}_{k-1}(1-e^{-t})}{1-e^{-t}}x^{n+1}\,. 
$$ 
Thus, we get the identity (\ref{30}).  
\qed\end{proof} 


\subsection{A more relation} 

\begin{theorem} 
\begin{align} 
&S_n^{(r,k)}(x|a_1,\dots,a_r)=x S_{n-1}^{(r,k)}(x|a_1,\dots,a_r)\notag\\
&\quad +\sum_{m=1}^n\frac{(-1)^{m-1}\binom{n-1}{m-1}B_m}{m}\sum_{j=1}^r a_j^m S_{n-m}^{(r,k)}(x|a_1,\dots,a_r)\notag\\
&\quad +\frac{1}{n}S_n^{(r+1,k-1)}(x|a_1,\dots,a_r,1)-\frac{1}{n}S_n^{(r+1,k)}(x|a_1,\dots,a_r,1)\,. 
\label{40} 
\end{align} 
\label{th40} 
\end{theorem}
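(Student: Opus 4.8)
The plan is to reuse the Appell/Sheffer recurrence $s_{n+1}(x)=\bigl(x-g'(t)/g(t)\bigr)\frac{1}{f'(t)}s_n(x)$ of \cite[Corollary 3.7.2]{Roman} exactly as in the proof of Theorem \ref{th30}, but to start the recursion one step lower. With $f(t)=t$ and $g=g_{r,k}$ as in (\ref{10c}), replacing $n+1$ by $n$ gives
\[
S_n^{(r,k)}(x|a_1,\dots,a_r)=x\,S_{n-1}^{(r,k)}(x|a_1,\dots,a_r)-\frac{g_{r,k}'(t)}{g_{r,k}(t)}S_{n-1}^{(r,k)}(x|a_1,\dots,a_r).
\]
The logarithmic derivative $g_{r,k}'(t)/g_{r,k}(t)$ has already been computed in the proof of Theorem \ref{th30}; I would split it into its ``Barnes part'' $\sum_{j=1}^r\frac{a_je^{a_jt}}{e^{a_jt}-1}-\frac{r}{t}$ and its ``polylogarithm part'' $\frac{e^{-t}}{1-e^{-t}}\bigl(1-\frac{{\rm Li}_{k-1}(1-e^{-t})}{{\rm Li}_k(1-e^{-t})}\bigr)$, and handle the two contributions to $-\frac{g_{r,k}'}{g_{r,k}}S_{n-1}^{(r,k)}$ separately. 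The only real difference from Theorem \ref{th30} is that the Barnes part is now expanded term-by-term through Bernoulli numbers instead of being combined over a common denominator.

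For the Barnes part I would put $u=a_jt$ in $\frac{ue^u}{e^u-1}=\frac{u}{1-e^{-u}}=\sum_{l\ge0}\frac{(-1)^lB_l}{l!}u^l$, so that $\frac{a_je^{a_jt}}{e^{a_jt}-1}=\frac1t+\sum_{l\ge1}\frac{(-1)^lB_la_j^l}{l!}t^{l-1}$. Summing over $j$, the $r$ copies of $\frac1t$ cancel the term $-\frac rt$, leaving the honest power series $\sum_{j=1}^r\sum_{l\ge1}\frac{(-1)^lB_la_j^l}{l!}t^{l-1}$. Applying this to $S_{n-1}^{(r,k)}$ and using (\ref{11c}) in the form $t^{l-1}S_{n-1}^{(r,k)}=(n-1)_{l-1}S_{n-l}^{(r,k)}$, together with the elementary identity $\frac{(n-1)_{l-1}}{l!}=\frac1l\binom{n-1}{l-1}$, collapses this contribution (the overall minus sign turning $(-1)^l$ into $(-1)^{l-1}$, and $l$ renamed to $m$) into $\sum_{m=1}^n\frac{(-1)^{m-1}\binom{n-1}{m-1}B_m}{m}\sum_{j=1}^ra_j^mS_{n-m}^{(r,k)}(x|a_1,\dots,a_r)$, the middle term of (\ref{40}).

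For the polylogarithm part I would follow the end of the proof of Theorem \ref{th30} verbatim: write $S_{n-1}^{(r,k)}=\frac{t^r}{\prod_{j=1}^r(e^{a_jt}-1)}\frac{{\rm Li}_k(1-e^{-t})}{1-e^{-t}}x^{n-1}$, cancel the common factor ${\rm Li}_k(1-e^{-t})$, and regroup $\frac{t^r}{\prod_j(e^{a_jt}-1)}\cdot\frac{1}{e^t-1}=\frac1t\cdot\frac{t^{r+1}}{\prod_j(e^{a_jt}-1)(e^t-1)}$, which is exactly the symbol attached to the augmented parameter list $(a_1,\dots,a_r,1)$. Since $\frac{{\rm Li}_k-{\rm Li}_{k-1}}{1-e^{-t}}$ is a delta series, the whole accumulated operator is a delta series $H(t)$, and the identity $\frac{H(t)}{t}x^{n-1}=\frac1nH(t)x^n$ for delta series lets me replace $\frac1t x^{n-1}$ by $\frac{x^n}{n}$. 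Splitting the numerator ${\rm Li}_k-{\rm Li}_{k-1}$ then yields exactly $\frac1nS_n^{(r+1,k-1)}(x|a_1,\dots,a_r,1)-\frac1nS_n^{(r+1,k)}(x|a_1,\dots,a_r,1)$, the last two terms of (\ref{40}).

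The main obstacle is bookkeeping rather than conceptual: I must handle the order-$(-1)$ symbol $\frac1t$ correctly, making sure the term $-\frac rt$ exactly cancels the $l=0$ contributions of the Barnes part so that no negative-order term survives, and that in the polylogarithm part $\frac1t$ is absorbed only against a delta series through the identity above. The remaining care is the sign-and-index tracking via $\frac{(n-1)_{l-1}}{l!}=\frac1l\binom{n-1}{l-1}$ so that the coefficients match (\ref{40}) precisely. As a sanity check, after the shift $n\mapsto n-1$ the resulting formula should coincide with Theorem \ref{th30}, since the middle sums agree by $\frac1n\binom nl=\frac{1}{n-l}\binom{n-1}{l}$ and $-(-1)^{n-l}=(-1)^{n-l-1}$.
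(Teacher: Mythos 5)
Your proposal is correct, but it takes a genuinely different route from the paper's own proof of Theorem \ref{th40}. The paper derives (\ref{40}) by writing $S_n^{(r,k)}(y|a_1,\dots,a_r)=\ang{F(t)e^{yt}|x^n}$ and transposing one power of $x$ onto the functional via $\ang{F(t)e^{yt}|x^n}=\ang{\partial_t\bigl(F(t)e^{yt}\bigr)|x^{n-1}}$, then applying the product rule to the three factors of the symbol: $\partial_t e^{yt}$ yields $yS_{n-1}^{(r,k)}$, the derivative of the Barnes factor yields the Bernoulli-number sum, and the derivative of the polylogarithm factor yields the two $(r+1)$-parameter terms. You instead recycle the operator recurrence $s_{n+1}(x)=\bigl(x-g'(t)/g(t)\bigr)\frac{1}{f'(t)}s_n(x)$ of \cite[Corollary 3.7.2]{Roman} from the proof of Theorem \ref{th30}, started one index lower, expanding the Barnes part of $g_{r,k}'/g_{r,k}$ termwise through Bernoulli numbers rather than as a single quotient. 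All your individual steps check out: the cancellation of $r/t$ against the $l=0$ terms, $t^{l-1}S_{n-1}^{(r,k)}=(n-1)_{l-1}S_{n-l}^{(r,k)}$ from (\ref{11c}), the identity $(n-1)_{l-1}/l!=\tfrac1l\binom{n-1}{l-1}$, and the legitimacy of $\tfrac{1}{t}H(t)x^{n-1}=\tfrac1n H(t)x^n$ for a delta series $H(t)$. Your route is the more economical one; indeed the paper's own Remark after Theorem \ref{th40} observes that (\ref{40}) is (\ref{30}) with $n$ replaced by $n-1$ once one uses $\frac{1}{n+1}\binom{n+1}{m}=\frac1m\binom{n}{m-1}$, so your argument essentially promotes that remark to a proof. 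What the paper's independent derivation buys is a second umbral technique (differentiating the symbol under the pairing) and an internal consistency check between the two equivalent recurrences.
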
 
\begin{proof} 
For $n\ge 1$ we have 
\begin{align*}  
S_n^{(r,k)}(y|a_1,\dots,a_r)&=\ang{\sum_{l=0}^\infty S_l^{(r,k)}(y|a_1,\dots,a_r)\frac{t^l}{l!}\Big|x^n}\\
&=\ang{\frac{t^r}{\prod_{j=1}^r(e^{a_j t}-1)}\frac{{\rm Li}_k(1-e^{-t})}{1-e^{-t}}e^{y t}\Big|x^n}\\
&=\ang{\partial_t\left(\frac{t^r}{\prod_{j=1}^r(e^{a_j t}-1)}\frac{{\rm Li}_k(1-e^{-t})}{1-e^{-t}}e^{y t}\right)\Big|x^{n-1}}\\
&=\ang{\left(\partial_t\frac{t^r}{\prod_{j=1}^r(e^{a_j t}-1)}\right)\frac{{\rm Li}_k(1-e^{-t})}{1-e^{-t}}e^{y t}\Big|x^{n-1}}\\
&\quad +\ang{\frac{t^r}{\prod_{j=1}^r(e^{a_j t}-1)}\left(\partial_t\frac{{\rm Li}_k(1-e^{-t})}{1-e^{-t}}\right)e^{y t}\Big|x^{n-1}}\\
&\quad +\ang{\frac{t^r}{\prod_{j=1}^r(e^{a_j t}-1)}\frac{{\rm Li}_k(1-e^{-t})}{1-e^{-t}}(\partial_t e^{y t})\Big|x^{n-1}}\\
&=y S_{n-1}^{(r,k)}(y|a_1,\dots,a_r)\\
&\quad +\ang{\left(\partial_t\frac{t^r}{\prod_{j=1}^r(e^{a_j t}-1)}\right)\frac{{\rm Li}_k(1-e^{-t})}{1-e^{-t}}e^{y t}\Big|x^{n-1}}\\
&\quad +\ang{\frac{t^r}{\prod_{j=1}^r(e^{a_j t}-1)}\left(\partial_t\frac{{\rm Li}_k(1-e^{-t})}{1-e^{-t}}\right)e^{y t}\Big|x^{n-1}}\,. 
\end{align*} 
Observe that 
\begin{align*} 
\partial_t\left(\frac{t^r}{\prod_{j=1}^r(e^{a_j t}-1)}\right)&=\frac{r t^{r-1}-t^r\sum_{j=1}^r\frac{a_j e^{a_j t}}{e^{a_j t}-1}}{\prod_{j=1}^r(e^{a_j t}-1)}\\
&=\frac{t^{r-1}}{\prod_{j=1}^r(e^{a_j t}-1)}\left(r-\sum_{j=1}^r\frac{a_j t e^{a_j t}}{e^{a_j t}-1}\right)\\
&=\frac{t^{r-1}}{\prod_{j=1}^r(e^{a_j t}-1)}\left(r-\sum_{j=1}^r\frac{-a_j t}{e^{-a_j t}-1}\right)\\
&=\frac{t^{r-1}}{\prod_{j=1}^r(e^{a_j t}-1)}\left(r-\sum_{j=1}^r\sum_{m=0}^\infty\frac{(-a_j)^m B_m t^m}{m!}\right)\\
&=\frac{t^{r-1}}{\prod_{j=1}^r(e^{a_j t}-1)}\left(r-\sum_{m=0}^\infty\left(\sum_{j=1}^r(-a_j)^m\right)\frac{B_m t^m}{m!}\right)\\
&=\frac{t^{r}}{\prod_{j=1}^r(e^{a_j t}-1)}\sum_{m=1}^\infty\left(\sum_{j=1}^r a_j^m\right)\frac{(-1)^{m-1}B_m}{m!}t^{m-1}\,. 
\end{align*} 
Thus,  
\begin{align*}  
&\ang{\left(\partial_t\frac{t^r}{\prod_{j=1}^r(e^{a_j t}-1)}\right)\frac{{\rm Li}_k(1-e^{-t})}{1-e^{-t}}e^{y t}\Big|x^{n-1}}\\
&=\ang{\frac{t^r}{\prod_{j=1}^r(e^{a_j t}-1)}\frac{{\rm Li}_k(1-e^{-t})}{1-e^{-t}}e^{y t}\Big|\sum_{m=1}^n\left(\sum_{j=1}^r a_j^m\right)\frac{(-1)^{m-1}B_m}{m!}t^{m-1}x^{n-1}}\\
&=\sum_{m=1}^n\frac{(-1)^{m-1}\binom{n-1}{m-1}B_m}{m}\sum_{j=1}^r a_j^m\ang{\frac{t^r}{\prod_{j=1}^r(e^{a_j t}-1)}\frac{{\rm Li}_k(1-e^{-t})}{1-e^{-t}}e^{y t}\Big|x^{n-m}}\\
&=\sum_{m=1}^n\frac{(-1)^{m-1}\binom{n-1}{m-1}B_m}{m}S_{n-m}^{(r,k)}(y|a_1,\dots,a_r)\sum_{j=1}^r a_j^m\,. 
\end{align*} 
Since 
$$ 
\frac{{\rm Li}_{k-1}(1-e^{-t})-{\rm Li}_k(1-e^{-t})}{1-e^{-t}}
=\left(\frac{1}{2^{k-1}}-\frac{1}{2^k}\right)t+\cdots
$$ 
is a delta series, we have  
\begin{align*} 
&\ang{\frac{t^r}{\prod_{j=1}^r(e^{a_j t}-1)}\left(\partial_t\frac{{\rm Li}_k(1-e^{-t})}{1-e^{-t}}\right)e^{y t}\Big|x^{n-1}}\\
&=\ang{\frac{t^r}{\prod_{j=1}^r(e^{a_j t}-1)}\frac{e^{-t}\bigl({\rm Li}_{k-1}(1-e^{-t})-{\rm Li}_k(1-e^{-t})\bigr)}{(1-e^{-t})^2}e^{y t}\Big|x^{n-1}}\\
&=\ang{\frac{t^r}{\prod_{j=1}^r(e^{a_j t}-1)}\frac{t}{e^t-1}\frac{{\rm Li}_{k-1}(1-e^{-t})-{\rm Li}_k(1-e^{-t})}{t(1-e^{-t})}e^{y t}\Big|x^{n-1}}\\
&=\ang{\frac{t^{r+1}}{\prod_{j=1}^r(e^{a_j t}-1)(e^t-1)}\frac{{\rm Li}_{k-1}(1-e^{-t})-{\rm Li}_k(1-e^{-t})}{1-e^{-t}}e^{y t}\Big|\frac{x^n}{n}}\\
&=\frac{1}{n}\ang{\frac{t^{r+1}}{\prod_{j=1}^r(e^{a_j t}-1)(e^t-1)}\frac{{\rm Li}_{k-1}(1-e^{-t})}{1-e^{-t}}e^{y t}\Big|x^n}\\
&\quad -\frac{1}{n}\ang{\frac{t^{r+1}}{\prod_{j=1}^r(e^{a_j t}-1)(e^t-1)}\frac{{\rm Li}_{k}(1-e^{-t})}{1-e^{-t}}e^{y t}\Big|x^n}\\
&=\frac{1}{n}S_n^{(r+1,k-1)}(y|a_1,\dots,a_r,1)-\frac{1}{n}S_n^{(r+1,k)}(y|a_1,\dots,a_r,1)\,. 
\end{align*}  
Therefore, we obtain the desired result.  
\qed\end{proof}

\noindent 
{\it Remark.}  
After simple modification, Theorem \ref{th40} becomes 
\begin{align*} 
&S_{n+1}^{(r,k)}(x|a_1,\dots,a_r)=x S_{n}^{(r,k)}(x|a_1,\dots,a_r)\\
&\quad +\sum_{l=1}^{n+1}\frac{(-1)^{l-1}\binom{n}{l-1}B_l}{l}\sum_{j=1}^r a_j^l S_{n+1-l}^{(r,k)}(x|a_1,\dots,a_r)\\
&\quad +\frac{1}{n+1}S_{n+1}^{(r+1,k-1)}(x|a_1,\dots,a_r,1)-\frac{1}{n+1}S_{n+1}^{(r+1,k)}(x|a_1,\dots,a_r,1)\,. 
\end{align*} 
which is the same as the above recurrence formula (\ref{30}) upon replacing $n$ by $n-1$.


\subsection{Relations with poly-Bernoulli numbers and Barnes' multiple Bernoulli numbers} 

\begin{theorem} 
\begin{multline} 
\sum_{m=0}^n\binom{n+1}{m}(-1)^{n-m}S_m^{(r,k)}(a_1,\dots,a_r)\\
=\sum_{l=0}^n\sum_{m=0}^l(-1)^{l-m}\binom{l}{m}\binom{n+1}{l+1}B_m^{(k-1)}B_{n-l}(a_1,\dots,a_r)\,. 
\label{50} 
\end{multline} 
\label{th50} 
\end{theorem}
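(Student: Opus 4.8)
The plan is to evaluate both sides of (\ref{50}) as the value of one and the same linear functional applied to $x^{n+1}$, using the umbral machinery of Section~2.

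First I would collapse the left-hand side. Writing $F(t)=\frac{t^r}{\prod_{j=1}^r(e^{a_jt}-1)}\frac{{\rm Li}_k(1-e^{-t})}{1-e^{-t}}$, so that $S_m^{(r,k)}(a_1,\dots,a_r)=\ang{F(t)|x^m}$ by (\ref{barnesmpbp}) and (\ref{uc9}), the left side is $\ang{F(t)\,|\,\sum_{m=0}^n(-1)^{n-m}\binom{n+1}{m}x^m}$. The elementary identity $\sum_{m=0}^n(-1)^{n-m}\binom{n+1}{m}x^m=x^{n+1}-(x-1)^{n+1}$, obtained by peeling off the $m=n+1$ term from the binomial expansion of $(x-1)^{n+1}$, rewrites the left side as $\ang{F(t)|x^{n+1}}-\ang{F(t)|(x-1)^{n+1}}$. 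Since $e^{-t}x^{n+1}=(x-1)^{n+1}$ by (\ref{uc13}) and (\ref{uc11}) lets me move the factor $e^{-t}$ onto $F(t)$, the left side becomes
$$
\Big\langle \frac{t^r}{\prod_{j=1}^r(e^{a_jt}-1)}{\rm Li}_k(1-e^{-t})\,\Big|\,x^{n+1}\Big\rangle,
$$
because $(1-e^{-t})F(t)=\frac{t^r}{\prod_{j=1}^r(e^{a_jt}-1)}{\rm Li}_k(1-e^{-t})$.

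Next I would expand this same functional into the right-hand side. The crucial input is an antiderivative identity for the polylogarithm: from $\frac{d}{dx}{\rm Li}_k(x)=\frac1x{\rm Li}_{k-1}(x)$ one gets $\frac{d}{dt}{\rm Li}_k(1-e^{-t})=\frac{e^{-t}}{1-e^{-t}}{\rm Li}_{k-1}(1-e^{-t})$, whose right-hand side is exactly the generating series $\sum_l B_l^{(k-1)}(-1)\frac{t^l}{l!}$, namely (\ref{pb}) with index $k-1$ evaluated at $x=-1$. Integrating from $0$ and using ${\rm Li}_k(0)=0$ yields ${\rm Li}_k(1-e^{-t})=\sum_{l=0}^\infty\frac{B_l^{(k-1)}(-1)}{l+1}\frac{t^{l+1}}{l!}$. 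Multiplying by $\frac{t^r}{\prod_{j=1}^r(e^{a_jt}-1)}=\sum_m B_m(a_1,\dots,a_r)\frac{t^m}{m!}$ from (\ref{bmb}), applying $\ang{\cdot|x^{n+1}}$ (which extracts $(n+1)!$ times the coefficient of $t^{n+1}$), and finally substituting $B_l^{(k-1)}(-1)=\sum_{m=0}^l(-1)^{l-m}\binom{l}{m}B_m^{(k-1)}$ from the Appell property produces precisely the double sum on the right of (\ref{50}), with the weight $\binom{n+1}{l+1}$ appearing from $\frac{(n+1)!}{(l+1)!(n-l)!}$.

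The main obstacle is locating and verifying the antiderivative identity ${\rm Li}_k(1-e^{-t})=\sum_l\frac{B_l^{(k-1)}(-1)}{l+1}\frac{t^{l+1}}{l!}$; everything downstream is bookkeeping. On the left, the one non-obvious algebraic step is the binomial collapse to $x^{n+1}-(x-1)^{n+1}$. Care is also needed in tracking the single index shift (the $\binom{n+1}{l+1}$ rather than a $\binom{n}{l}$ weight), which is precisely the trace left by the lone integration of the polylogarithm when the two sides are matched.
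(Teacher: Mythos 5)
Your proposal is correct and follows essentially the same route as the paper: both evaluate $\left\langle \frac{t^r}{\prod_{j=1}^r(e^{a_jt}-1)}{\rm Li}_k(1-e^{-t})\,\Big|\,x^{n+1}\right\rangle$ in two ways, collapsing the left side via $(1-e^{-t})x^{n+1}=x^{n+1}-(x-1)^{n+1}$ and obtaining the right side by integrating $\frac{d}{dt}{\rm Li}_k(1-e^{-t})=\frac{e^{-t}}{1-e^{-t}}{\rm Li}_{k-1}(1-e^{-t})$ from $0$ to $t$. The only cosmetic difference is that you package the Cauchy product of $e^{-s}$ with the poly-Bernoulli series as $B_l^{(k-1)}(-1)$ via the Appell property, where the paper multiplies the two series out directly; the resulting double sum is identical.
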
 
\begin{proof} 
We shall compute 
$$
\ang{\frac{t^r}{\prod_{j=1}^r(e^{a_j t}-1)}{\rm Li}_k(1-e^{-t})\Big|x^{n+1}}
$$ 
in two different ways. 
On the one hand,  
\begin{align*} 
&\ang{\frac{t^r}{\prod_{j=1}^r(e^{a_j t}-1)}{\rm Li}_k(1-e^{-t})\Big|x^{n+1}}
=\ang{\frac{t^r}{\prod_{j=1}^r(e^{a_j t}-1)}\frac{{\rm Li}_k(1-e^{-t})}{1-e^{-t}}\Big|(1-e^{-t})x^{n+1}}\\
&=\ang{\frac{t^r}{\prod_{j=1}^r(e^{a_j t}-1)}\frac{{\rm Li}_k(1-e^{-t})}{1-e^{-t}}\Big|x^{n+1}-(x-1)^{n+1}}\\
&=\sum_{m=0}^n\binom{n+1}{m}(-1)^{n-m}\ang{\frac{t^r}{\prod_{j=1}^r(e^{a_j t}-1)}\frac{{\rm Li}_k(1-e^{-t})}{1-e^{-t}}\Big|x^m}\\
&=\sum_{m=0}^n\binom{n+1}{m}(-1)^{n-m}S_m^{(r,k)}(a_1,\dots,a_r)\,. 
\end{align*} 
On the other hand,  
\begin{align*} 
&\ang{\frac{t^r}{\prod_{j=1}^r(e^{a_j t}-1)}{\rm Li}_k(1-e^{-t})\Big|x^{n+1}}
=\ang{{\rm Li}_k(1-e^{-t})\Big|\frac{t^r}{\prod_{j=1}^r(e^{a_j t}-1)}x^{n+1}}\\
&=\ang{{\rm Li}_k(1-e^{-t})\Big|B_{n+1}(x|a_1,\dots,a_r)}\\
&=\ang{\int_0^t\bigl({\rm Li}_k(1-e^{-s})\bigr)'ds\Big|B_{n+1}(x|a_1,\dots,a_r)}\\
&=\ang{\int_0^t e^{-s}\frac{{\rm Li}_{k-1}(1-e^{-s})}{1-e^{-s}}ds\Big|B_{n+1}(x|a_1,\dots,a_r)}\\
&=\ang{\int_0^t\left(\sum_{j=0}^\infty\frac{(-s)^j}{j!}\right)\left(\sum_{m=0}^\infty\frac{B_m^{(k-1)}}{m!}s^m\right)ds\Big|B_{n+1}(x|a_1,\dots,a_r)}\\
&=\ang{\sum_{l=0}^\infty\left(\sum_{m=0}^l(-1)^{l-m}\binom{l}{m}B_m^{(k-1)}\right)\frac{1}{l!}\int_0^ts^l ds\Big|B_{n+1}(x|a_1,\dots,a_r)}\\
&=\sum_{l=0}^n\sum_{m=0}^l(-1)^{l-m}\binom{l}{m}\frac{B_m^{(k-1)}}{(l+1)!}\ang{t^{l+1}\Big|B_{n+1}(x|a_1,\dots,a_r)}\\
&=\sum_{l=0}^n\sum_{m=0}^l(-1)^{l-m}\binom{l}{m}\frac{B_m^{(k-1)}}{(l+1)!}(n+1)_{l+1}B_{n-l}(a_1,\dots,a_r)\\
&=\sum_{l=0}^n\sum_{m=0}^l(-1)^{l-m}\binom{l}{m}\binom{n+1}{l+1}B_m^{(k-1)}B_{n-l}(a_1,\dots,a_r)\,. 
\end{align*} 
Here, $B_{n-l}(a_1,\dots,a_r)=B_{n-l}(0|a_1,\dots,a_r)$. 
Thus, we get (\ref{50}).  
\qed\end{proof}


\subsection{Relations with the Stirling numbers of the second kind and the falling factorials}  

\begin{theorem} 
\begin{equation}  
S_n^{(r,k)}(x|a_1,\dots,a_r)
=\sum_{m=0}^n\left(\sum_{l=m}^n S_2(l,m)\binom{n}{l}S_{n-l}^{(r,k)}(a_1,\dots,a_r)\right)(x)_m\,. 
\label{60} 
\end{equation} 
\label{th60} 
\end{theorem}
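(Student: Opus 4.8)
The plan is to read the coefficients off directly from the change-of-basis formula (\ref{uc19}), since both sequences appearing in (\ref{60}) are Sheffer. By (\ref{10c}) we have $S_n^{(r,k)}(x|a_1,\dots,a_r)\sim\bigl(g_{r,k}(t),t\bigr)$, and the falling factorials form the Sheffer sequence $(x)_m\sim(1,e^t-1)$. To see the latter, I would apply Lemma \ref{th234} with $g(t)=1$ and $f(t)=e^t-1$, so that $\bar f(t)=\ln(1+t)$ and the generating function becomes $e^{y\ln(1+t)}=(1+t)^y=\sum_{m=0}^\infty(y)_m t^m/m!$, the standard binomial expansion.

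With $s_n(x)=S_n^{(r,k)}(x|a_1,\dots,a_r)$ (so $g=g_{r,k}$, $f(t)=t$, $\bar f(t)=t$) and $r_m(x)=(x)_m$ (so $h(t)=1$, $l(t)=e^t-1$), formula (\ref{uc19}) gives
\begin{equation*}
C_{n,m}=\frac{1}{m!}\ang{\frac{1}{g_{r,k}(t)}(e^t-1)^m\Big|x^n}\,.
\end{equation*}
Here $1/g_{r,k}(t)=\frac{t^r}{\prod_{j=1}^r(e^{a_j t}-1)}\frac{{\rm Li}_k(1-e^{-t})}{1-e^{-t}}$ is, by (\ref{barnesmpbp}) at $x=0$, exactly the generating function of the numbers $S_i^{(r,k)}(a_1,\dots,a_r)$.

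Next I would expand $(e^t-1)^m=m!\sum_{l=m}^\infty S_2(l,m)t^l/l!$ using the definition of the Stirling numbers, so that
\begin{equation*}
C_{n,m}=\sum_{l=m}^\infty\frac{S_2(l,m)}{l!}\ang{\frac{1}{g_{r,k}(t)}\,t^l\Big|x^n}\,.
\end{equation*}
Moving $t^l$ onto $x^n$ via (\ref{uc11}) and using $t^l x^n=(n)_l x^{n-l}$ from (\ref{uc13}), which vanishes for $l>n$, the pairing reduces to $(n)_l\,S_{n-l}^{(r,k)}(a_1,\dots,a_r)$ for $l\le n$. Since $(n)_l/l!=\binom{n}{l}$, this yields $C_{n,m}=\sum_{l=m}^n S_2(l,m)\binom{n}{l}S_{n-l}^{(r,k)}(a_1,\dots,a_r)$, which is precisely the bracketed coefficient in (\ref{60}).

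The argument is essentially bookkeeping once the Sheffer pair $(x)_m\sim(1,e^t-1)$ is identified, so I do not expect a serious obstacle; the only step requiring care is the reduction $\ang{g_{r,k}(t)^{-1}t^l|x^n}=(n)_l\,S_{n-l}^{(r,k)}(a_1,\dots,a_r)$, where one must transfer $t^l$ to $x^n$ and then recognize the surviving pairing as a mixed-type number through (\ref{barnesmpbp}).
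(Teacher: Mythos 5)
Your proposal is correct and follows essentially the same route as the paper: both identify $(x)_m\sim(1,e^t-1)$, apply (\ref{uc19}) to get $C_{n,m}=\frac{1}{m!}\ang{g_{r,k}(t)^{-1}(e^t-1)^m|x^n}$, and expand $(e^t-1)^m$ via the Stirling numbers to reduce the pairing to $\binom{n}{l}S_{n-l}^{(r,k)}(a_1,\dots,a_r)$. The only cosmetic difference is that you expand $(e^t-1)^m$ before transferring it onto $x^n$ while the paper transfers first and expands second; these are interchangeable by (\ref{uc11}).
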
 
\begin{proof} 
For (\ref{10c}) and $(x)_n\sim(1,e^t-1)$, assume that 
$S_n^{(r,k)}(x|a_1,\dots,a_r)=\sum_{m=0}^n C_{n,m}(x)_m$. By (\ref{uc19}), we have 
\begin{align*} 
C_{n,m}&=\frac{1}{m!}\ang{\frac{1}{\frac{\prod_{j=1}^r(e^{a_j t}-1)}{t^r}\frac{1-e^{-t}}{{\rm Li}_k(1-e^{-t})}}(e^t-1)^m\Big|x^n}\\
&=\frac{1}{m!}\ang{\frac{t^r}{\prod_{j=1}^r(e^{a_j t}-1)}\frac{{\rm Li}_k(1-e^{-t})}{1-e^{-t}}\Big|(e^t-1)^m x^n}\\
&=\frac{1}{m!}\ang{\frac{t^r}{\prod_{j=1}^r(e^{a_j t}-1)}\frac{{\rm Li}_k(1-e^{-t})}{1-e^{-t}}\Big|m!\sum_{l=m}^n S_2(l,m)\frac{t^l}{l!}x^n}\\
&=\sum_{l=m}^n S_2(l,m)\binom{n}{l}\ang{\frac{t^r}{\prod_{j=1}^r(e^{a_j t}-1)}\frac{{\rm Li}_k(1-e^{-t})}{1-e^{-t}}\Big|x^{n-l}}\\
&=\sum_{l=m}^n S_2(l,m)\binom{n}{l}S_{n-l}^{(r,k)}(a_1,\dots,a_r)\,. 
\end{align*} 
Thus, we get the identity (\ref{60}). 
\qed\end{proof}


\subsection{Relations with the Stirling numbers of the second kind and the rising factorials} 

\begin{theorem} 
\begin{equation} 
S_n^{(r,k)}(x|a_1,\dots,a_r)=\sum_{m=0}^n\left(\sum_{l=m}^n S_2(l,m)\binom{n}{l}S_{n-l}^{(r,k)}(-m|a_1,\dots,a_r)\right)(x)^{(m)}\,. 
\label{70} 
\end{equation} 
\label{th70} 
\end{theorem}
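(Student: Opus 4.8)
The plan is to mirror the proof of Theorem \ref{th60}, replacing the falling-factorial basis with the rising-factorial basis and reading off the connection coefficients from the change-of-basis formula (\ref{uc19}). The first task is to identify the Sheffer data of the rising factorials. From the generating function $\sum_{m=0}^\infty (x)^{(m)} t^m/m! = (1-t)^{-x} = e^{-x\ln(1-t)}$, Lemma \ref{th234} gives $\bar f(t) = -\ln(1-t)$, whose compositional inverse is $f(t) = 1-e^{-t}$; hence $(x)^{(m)}\sim\bigl(1,\,1-e^{-t}\bigr)$.

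Writing $S_n^{(r,k)}(x|a_1,\dots,a_r)=\sum_{m=0}^n C_{n,m}(x)^{(m)}$ and applying (\ref{uc19}) with the pair (\ref{10c}) and $(x)^{(m)}\sim(1,1-e^{-t})$, I obtain
$$C_{n,m} = \frac{1}{m!}\ang{\frac{t^r}{\prod_{j=1}^r(e^{a_jt}-1)}\frac{{\rm Li}_k(1-e^{-t})}{1-e^{-t}}(1-e^{-t})^m\Big|x^n}.$$
The key step, and the one place where this genuinely differs from Theorem \ref{th60}, is the factorization $(1-e^{-t})^m = e^{-mt}(e^t-1)^m$, which peels an exponential factor $e^{-mt}$ off the Stirling-generating factor $(e^t-1)^m$. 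I would then transfer $(e^t-1)^m$ across the pairing via (\ref{uc11}) so that it acts on $x^n$, using the defining expansion $(e^t-1)^m = m!\sum_{l\ge m} S_2(l,m)\,t^l/l!$ together with $t^l x^n = (n)_l x^{n-l}$; this produces $\sum_{l=m}^n S_2(l,m)\binom{n}{l}x^{n-l}$ and cancels the prefactor $1/m!$.

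Finally, I would recognize the leftover functional. After the transfer the remaining series is $\frac{t^r}{\prod_{j=1}^r(e^{a_jt}-1)}\frac{{\rm Li}_k(1-e^{-t})}{1-e^{-t}}e^{-mt}$, which is precisely the generating function (\ref{barnesmpbp}) with the argument $x$ replaced by $-m$; pairing it against $x^{n-l}$ therefore extracts $S_{n-l}^{(r,k)}(-m|a_1,\dots,a_r)$. Assembling the pieces yields
$$C_{n,m} = \sum_{l=m}^n S_2(l,m)\binom{n}{l}S_{n-l}^{(r,k)}(-m|a_1,\dots,a_r),$$
which is exactly the coefficient in (\ref{70}). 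The main obstacle is spotting the factorization $(1-e^{-t})^m = e^{-mt}(e^t-1)^m$; everything else is a routine transcription of the falling-factorial argument, and it is precisely this exponential factor that shifts the evaluation point to $-m$ and so accounts for the appearance of $S_{n-l}^{(r,k)}(-m|a_1,\dots,a_r)$ in place of $S_{n-l}^{(r,k)}(a_1,\dots,a_r)$.
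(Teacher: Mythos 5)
Your proposal is correct and follows essentially the same route as the paper: the same Sheffer pair $(x)^{(m)}\sim(1,1-e^{-t})$, the same application of (\ref{uc19}), and the same key factorization $(1-e^{-t})^m=e^{-mt}(e^t-1)^m$ that produces the Stirling numbers and shifts the evaluation point to $-m$. The only cosmetic difference is that the paper moves $e^{-mt}$ to the functional side and applies it to $S_{n-l}^{(r,k)}(x|a_1,\dots,a_r)$, whereas you keep it with the generating function and read off $S_{n-l}^{(r,k)}(-m|a_1,\dots,a_r)$ directly; these are equivalent.
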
 
\begin{proof} 
For (\ref{10c}) and $(x)^{(n)}=x(x+1)\cdots(x+n-1)\sim(1,1-e^{-t})$, assume that 
$S_n^{(r,k)}(x|a_1,\dots,a_r)=\sum_{m=0}^n C_{n,m}(x)^{(m)}$. By (\ref{uc19}), we have 
\begin{align*} 
C_{n,m}&=\frac{1}{m!}\ang{\frac{1}{\frac{\prod_{j=1}^r(e^{a_j t}-1)}{t^r}\frac{1-e^{-t}}{{\rm Li}_k(1-e^{-t})}}(1-e^{-t})^m\Big|x^n)}\\
&=\frac{1}{m!}\ang{\frac{t^r}{\prod_{j=1}^r(e^{a_j t}-1)}\frac{{\rm Li}_k(1-e^{-t})}{1-e^{-t}}e^{-m t}\Big|(e^t-1)^m x^n)}\\
&=\sum_{l=m}^n S_2(l,m)\binom{n}{l}\ang{e^{-mt}\Big|\frac{t^r}{\prod_{j=1}^r(e^{a_j t}-1)}\frac{{\rm Li}_k(1-e^{-t})}{1-e^{-t}}x^{n-l}}\\
&=\sum_{l=m}^n S_2(l,m)\binom{n}{l}\ang{e^{-mt}\Big|S_{n-l}^{(r,k)}(x|a_1,\dots,a_r)}\\
&=\sum_{l=m}^n S_2(l,m)\binom{n}{l}S_{n-l}^{(r,k)}(-m|a_1,\dots,a_r)\,. 
\end{align*}
Thus, we get the identity (\ref{70}). 
\qed\end{proof}


\subsection{Relations with higher-order Frobenius-Euler polynomials} 

For $\lambda\in\mathbb C$ with $\lambda\ne 1$, the Frobenius-Euler polynomials of order $r$, $H_{n}^{(r)}(x|\lambda)$ are defined by the generating function 
$$
\left(\frac{1-\lambda}{e^t-\lambda}\right)^r e^{xt}=\sum_{n=0}^\infty H_{n}^{(r)}(x|\lambda)\frac{t^n}{n!}
$$ 
(see e.g. \cite{KimKim1}). 

\begin{theorem} 
\begin{equation} 
S_n^{(r,k)}(x|a_1,\dots,a_r)
=\sum_{m=0}^n\left(\frac{\binom{n}{m}}{(1-\lambda)^s}\sum_{j=0}^s\binom{s}{j}(-\lambda)^{s-j}S_{n-m}^{(r,k)}(j|a_1,\dots,a_r)\right)H_m^{(s)}(x|\lambda)\,. 
\label{80} 
\end{equation} 
\label{th80} 
\end{theorem}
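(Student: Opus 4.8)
The plan is to apply the change-of-basis formula (\ref{uc19}) with the Sheffer sequence $s_n(x)=S_n^{(r,k)}(x|a_1,\dots,a_r)\sim(g_{r,k}(t),t)$ from (\ref{10c}) expanded against the higher-order Frobenius--Euler polynomials $r_n(x)=H_n^{(s)}(x|\lambda)$, exactly as was done for the falling and rising factorials in Theorems \ref{th60} and \ref{th70}. First I would identify the Sheffer data for $H_n^{(s)}(x|\lambda)$. Comparing the generating function $\bigl(\tfrac{1-\lambda}{e^t-\lambda}\bigr)^s e^{xt}$ with the characterization in Lemma \ref{th234}, and noting that the delta series here is again $l(t)=t$ (so $\bar l(t)=t$), one reads off $H_n^{(s)}(x|\lambda)\sim\bigl(h(t),t\bigr)$ with $h(t)=\bigl(\tfrac{e^t-\lambda}{1-\lambda}\bigr)^s$.

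Because both sequences share the delta series $t$, whose compositional inverse is $t$ itself, (\ref{uc19}) collapses to $C_{n,m}=\tfrac{1}{m!}\ang{\tfrac{h(t)}{g_{r,k}(t)}\,t^m|x^n}$. Substituting $\tfrac{1}{g_{r,k}(t)}=\tfrac{t^r}{\prod_{j=1}^r(e^{a_j t}-1)}\tfrac{{\rm Li}_k(1-e^{-t})}{1-e^{-t}}$ and $h(t)=(1-\lambda)^{-s}(e^t-\lambda)^s$, I would next move the factor $t^m$ onto $x^n$ via (\ref{uc13}), using $t^m x^n=(n)_m\,x^{n-m}$, which converts the prefactor $(n)_m/m!$ into $\binom{n}{m}$ and reduces the argument to $x^{n-m}$.

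The remaining step is to expand $(e^t-\lambda)^s=\sum_{j=0}^s\binom{s}{j}(-\lambda)^{s-j}e^{jt}$ by the binomial theorem and interchange with the pairing, so that
\[
C_{n,m}=\frac{\binom{n}{m}}{(1-\lambda)^s}\sum_{j=0}^s\binom{s}{j}(-\lambda)^{s-j}\ang{\frac{t^r}{\prod_{i=1}^r(e^{a_i t}-1)}\frac{{\rm Li}_k(1-e^{-t})}{1-e^{-t}}\,e^{jt}\Big|x^{n-m}}.
\]
Here I would recognize the inner bracket, by the defining generating function (\ref{barnesmpbp}) together with (\ref{uc11}) and (\ref{uc13}), as precisely $S_{n-m}^{(r,k)}(j|a_1,\dots,a_r)$; this yields the claimed coefficient and hence (\ref{80}).

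The only genuinely delicate point is the bookkeeping in that last step: one must be careful to treat $e^{jt}$ as a multiplier inside the umbral pairing rather than as a shift acting on $x^{n-m}$, and then to match the resulting functional against the generating function (\ref{barnesmpbp}) evaluated at $y=j$. Everything else is a routine specialization of the machinery already used repeatedly above, so I expect no substantive obstacle beyond correctly pinning down the Sheffer pair for $H_n^{(s)}(x|\lambda)$ and tracking the $e^{jt}$ factor.
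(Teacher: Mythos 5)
Your proposal is correct and follows essentially the same route as the paper: both apply (\ref{uc19}) with $H_n^{(s)}(x|\lambda)\sim\bigl(\bigl(\frac{e^t-\lambda}{1-\lambda}\bigr)^s,t\bigr)$, move $t^m$ onto $x^n$ to produce $\binom{n}{m}x^{n-m}$, expand $(e^t-\lambda)^s$ binomially, and identify the resulting pairing with $S_{n-m}^{(r,k)}(j|a_1,\dots,a_r)$ via the generating function. The point you flag about $e^{jt}$ is harmless, since by (\ref{uc11}) the multiplier and shift readings of $e^{jt}$ give the same value.
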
 
\begin{proof} 
For (\ref{10c}) and 
$$
H_n^{(s)}(x|\lambda)\sim\left(\left(\frac{e^t-\lambda}{1-\lambda}\right)^s, t\right)\,,  
$$   
assume that 
$S_n^{(r,k)}(x|a_1,\dots,a_r)=\sum_{m=0}^n C_{n,m}H_m^{(s)}(x|\lambda)$. By (\ref{uc19}), we have 
\begin{align*} 
C_{n,m}&=\frac{1}{m!}\ang{\left(\frac{e^t-\lambda}{1-\lambda}\right)^s\frac{t^r}{\prod_{j=1}^r(e^{a_j t}-1)}\frac{{\rm Li}_k(1-e^{-t})}{1-e^{-t}}t^m\Big|x^n}\\
&=\frac{1}{m!(1-\lambda)^s}\ang{(e^t-\lambda)^s\frac{t^r}{\prod_{j=1}^r(e^{a_j t}-1)}\frac{{\rm Li}_k(1-e^{-t})}{1-e^{-t}}\Big|t^m x^n}\\
&=\frac{\binom{n}{m}}{(1-\lambda)^s}\sum_{j=0}^s\binom{s}{j}(-\lambda)^{s-j}\ang{e^{j t}\Big|\frac{t^r}{\prod_{j=1}^r(e^{a_j t}-1)}\frac{{\rm Li}_k(1-e^{-t})}{1-e^{-t}}x^{n-m}}\\
&=\frac{\binom{n}{m}}{(1-\lambda)^s}\sum_{j=0}^s\binom{s}{j}(-\lambda)^{s-j}S_{n-m}^{(r,k)}(j|a_1,\dots,a_r)\,. 
\end{align*} 
Thus, we get the identity (\ref{80}).  
\qed\end{proof}


\subsection{Relations with higher-order Bernoulli polynomials} 

Bernoulli polynomials $\mathfrak B_n^{(r)}(x)$ of order $r$ are defined by 
$$  
\left(\frac{t}{e^t-1}\right)^r e^{xt}=\sum_{n=0}^\infty\frac{\mathfrak B_n^{(r)}(x)}{n!}t^n
$$   
(see e.g. \cite[Section 2.2]{Roman}). 

\begin{theorem} 
\begin{equation} 
S_n^{(r,k)}(x|a_1,\dots,a_r)
=\sum_{m=0}^n\left(\binom{n}{m}\sum_{l=0}^{n-m}\frac{\binom{n-m}{l}}{\binom{l+s}{l}}S_2(l+s,s)S_{n-m-l}^{(r,k)}(a_1,\dots,a_r)\right)\mathfrak B_m^{(s)}(x)\,. 
\label{90} 
\end{equation} 
\label{th90} 
\end{theorem}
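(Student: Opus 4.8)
The plan is to follow the same umbral-calculus template used in Theorems \ref{th60}, \ref{th70}, and \ref{th80}: expand $S_n^{(r,k)}(x|a_1,\dots,a_r)$ in the basis of the higher-order Bernoulli polynomials $\mathfrak B_m^{(s)}(x)$ and compute the connection coefficients via formula (\ref{uc19}). First I would pin down the Sheffer data of $\mathfrak B_m^{(s)}(x)$. From its generating function and Lemma \ref{th234}, since $\frac{1}{g(\bar f(t))}e^{y\bar f(t)}=\left(\frac{t}{e^t-1}\right)^s e^{yt}$ with $f(t)=t$ (so $\bar f(t)=t$), one reads off
$$\mathfrak B_m^{(s)}(x)\sim\left(\left(\frac{e^t-1}{t}\right)^s,\,t\right)\,.$$

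Next I would write $S_n^{(r,k)}(x|a_1,\dots,a_r)=\sum_{m=0}^n C_{n,m}\mathfrak B_m^{(s)}(x)$ and apply (\ref{uc19}) with $g=g_{r,k}$, $h(t)=(e^t-1)^s/t^s$, and $f(t)=l(t)=t$. Using $1/g_{r,k}(t)=\frac{t^r}{\prod_{j=1}^r(e^{a_jt}-1)}\frac{{\rm Li}_k(1-e^{-t})}{1-e^{-t}}$ and moving $t^m$ onto $x^n$ (via $t^m x^n=(n)_m x^{n-m}$ together with $(n)_m/m!=\binom{n}{m}$), this yields
$$C_{n,m}=\binom{n}{m}\ang{\left(\frac{e^t-1}{t}\right)^s\frac{t^r}{\prod_{j=1}^r(e^{a_jt}-1)}\frac{{\rm Li}_k(1-e^{-t})}{1-e^{-t}}\Big|x^{n-m}}\,.$$

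Then I would expand the Stirling factor exactly as in the proof of Theorem \ref{th100}: from $(e^t-1)^s=s!\sum_{l\ge s}S_2(l,s)\frac{t^l}{l!}$ one gets, after reindexing $l\mapsto l+s$,
$$\left(\frac{e^t-1}{t}\right)^s=s!\sum_{l=0}^\infty S_2(l+s,s)\frac{t^l}{(l+s)!}\,.$$
Substituting this and pushing each $t^l$ onto $x^{n-m}$ collapses the pairing to the number $S_{n-m-l}^{(r,k)}(a_1,\dots,a_r)$, giving
$$C_{n,m}=\binom{n}{m}\sum_{l=0}^{n-m}\frac{s!\,(n-m)_l}{(l+s)!}\,S_2(l+s,s)\,S_{n-m-l}^{(r,k)}(a_1,\dots,a_r)\,.$$

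The only real obstacle is the concluding bookkeeping: rewriting the factorial coefficient in the binomial form appearing in (\ref{90}). I expect this to reduce to the identity $\frac{s!\,(n-m)_l}{(l+s)!}=\frac{\binom{n-m}{l}}{\binom{l+s}{l}}$, which follows from $\binom{n-m}{l}=(n-m)_l/l!$ and $\binom{l+s}{l}=(l+s)!/(l!\,s!)$, whose quotient is precisely $(n-m)_l\,s!/(l+s)!$. With this substitution $C_{n,m}$ matches the coefficient in (\ref{90}), completing the proof.
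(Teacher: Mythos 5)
Your proposal is correct and follows essentially the same route as the paper: the same Sheffer pair $\mathfrak B_m^{(s)}(x)\sim\bigl(((e^t-1)/t)^s,t\bigr)$, the same application of (\ref{uc19}) with $t^m x^n=(n)_m x^{n-m}$, the same truncated expansion $((e^t-1)/t)^s=s!\sum_l S_2(l+s,s)t^l/(l+s)!$, and the same final rewriting $\frac{s!\,(n-m)_l}{(l+s)!}=\binom{n-m}{l}\big/\binom{l+s}{l}$. No gaps.
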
 
\begin{proof} 
For (\ref{10c}) and 
$$
\mathfrak B_n^{(s)}(x)\sim\left(\left(\frac{e^t-1}{t}\right)^s, t\right)\,,  
$$  
assume that $S_n^{(r,k)}(x|a_1,\dots,a_r)=\sum_{m=0}^n C_{n,m}\mathfrak B_m^{(s)}(x)$.  
By (\ref{uc19}), we have  
\begin{align*} 
C_{n,m}&=\frac{1}{m!}\ang{\left(\frac{e^t-1}{t}\right)^s\frac{t^r}{\prod_{j=1}^r(e^{a_j t}-1)}\frac{{\rm Li}_k(1-e^{-t})}{1-e^{-t}}t^m\Big|x^n}\\
&=\binom{n}{m}\ang{\frac{t^r}{\prod_{j=1}^r(e^{a_j t}-1)}\frac{{\rm Li}_k(1-e^{-t})}{1-e^{-t}}\Big|\left(\frac{e^t-1}{t}\right)^s x^{n-m}}\\
&=\binom{n}{m}\ang{\frac{t^r}{\prod_{j=1}^r(e^{a_j t}-1)}\frac{{\rm Li}_k(1-e^{-t})}{1-e^{-t}}\Big|\sum_{l=0}^{n-m}\frac{s!}{(l+s)!}S_2(l+s,s)t^l x^{n-m}}\\
&=\binom{n}{m}\sum_{l=0}^{n-m}\frac{s!}{(l+s)!}S_2(l+s,s)(n-m)_l\ang{\frac{t^r}{\prod_{j=1}^r(e^{a_j t}-1)}\frac{{\rm Li}_k(1-e^{-t})}{1-e^{-t}}\Big|x^{n-m-l}}\\
&=\binom{n}{m}\sum_{l=0}^{n-m}\frac{s!}{(l+s)!}S_2(l+s,s)(n-m)_l S_{n-m-l}^{(r,k)}(a_1,\dots,a_r)\\
&=\binom{n}{m}\sum_{l=0}^{n-m}\frac{\binom{n-m}{l}}{\binom{l+s}{l}}S_2(l+s,s)S_{n-m-l}^{(r,k)}(a_1,\dots,a_r)\,. 
\end{align*} 
Thus, we get the identity (\ref{90}).  
\qed\end{proof}


\begin{thebibliography}{99}

\bibitem{BH}  
A. Bayad and Y. Hamahata, {\em  
Polylogarithms and poly-Bernoulli polynomials}, 
Kyushu J. Math. {\bf 65} (2011), 15--24. 

\bibitem{BKKL} 
A. Bayad, T. Kim, W. J. Kim and S. H. Lee, {\em 
Arithmetic properties of $q$-Barnes polynomials}, 
J. Comput. Anal. Appl. {\bf 15} (2013), 111--117. 

\bibitem{CC} 
M. -A. Coppo and B. Candelpergher, {\em 
The Arakawa-Kaneko zeta functions}, 
Ramanujan J. {\bf 22} (2010), 153--162.

\bibitem{JKKLPR}
L. C. Jang, J. H. Kim, T. Kim, D. H. Lee, D. W. Park and C. S. Ryoo, {\em  
On Witt's formula for the Barnes' multiple Bernoulli polynomials}, 
Far East J. Math. Sci. (FJMS) {\bf 13} (2004), 309--317.

\bibitem{JKKH}  
L. Jang, T. Kim, Y. -H. Kim, K. -W. Hwang, {\em 
Note on the $q$ $q$-extension of Barnes' type multiple Euler polynomials}, 
J. Inequal. Appl. {\bf 2009}, Art. ID 136532, 8 pp. 

\bibitem{JK} 
L. Jang and T. Kim, {\em  
$q$-analogue of Euler-Barnes' numbers and polynomials},  
Bull. Korean Math. Soc. {\bf 42} (2005), 491--499.

\bibitem{KimKim1}  
D. S. Kim and T. Kim, {\em  
Some identities of Frobenius-Euler polynomials arising from umbral calculus},  
Adv. Difference Equ. {\bf 2012} (2012), \#196. 

\bibitem{KKL} 
D. S. Kim, T. Kim and S. -H. Lee, {\em  
Poly-Bernoulli polynomials arising from umbral calculus},  
available at http://arxiv.org/pdf/1306.6697.pdf

\bibitem{Kim1} 
T. Kim, {\em 
On Euler-Barnes multiple zeta functions}, 
Russ. J. Math. Phys.  
{\bf 10} (2003), 261--267. 

\bibitem{Kim2} 
T. Kim, {\em 
$p$-adic $q$-integrals associated with the Changhee-Barnes' $q$-Bernoulli polynomials},  
Integral Transforms Spec. Funct. {\bf 15} (2004), 415--420. 

\bibitem{Kim3} 
T. Kim, {\em 
Barnes-type multiple $q$-zeta functions and $q$-Euler polynomials}, 
J. Phys. A  {\bf 43} (2010), 255201, 11pp. 

\bibitem{KR}
T. Kim and S. -H. Rim, {\em  
On Changhee-Barnes' $q$-Euler numbers and polynomials}, 
Adv. Stud. Contemp. Math. (Kyungshang) {\bf 9} (2004), 81--86. 

\bibitem{Roman}  
S. Roman, {\em 
The umbral Calculus}, 
Dover, New York, 2005. 

\bibitem{SKP}  
Y. Simsek, T. Kim and I. -S. Pyung, {\em  
Barnes' type multiple Changhee $q$-zeta functions},  
Adv. Stud. Contemp. Math. (Kyungshang) {\bf 10} (2005), 121--129. 

\end{thebibliography}
\end{document}